\theoremstyle{plain}
  \newtheorem{theorem}{Theorem}[section]
  \newtheorem{lemma}[theorem]{Lemma}
\theoremstyle{definition}
  \newtheorem{example}[theorem]{Example}
  \newtheorem{remark}[theorem]{Remark}
\newcommand{\assign}{:=}
\newcommand{\dueto}[1]{\textup{\textbf{(#1) }}}
\newcommand{\emdash}{---}
\newcommand{\mathd}{\mathrm{d}}
\newcommand{\nin}{\not\in}
\newcommand{\tmem}[1]{{\em #1\/}}
\newcommand{\mathbbm}[1]{\mathbb{#1}}
\title{On gamma quotients and infinite products}
\author{Marc Chamberland}
\address{Department of Mathematics and Statistics,
Grinnell College,
1116 Eighth Avenue,
Grinnell, IA 50112,
United States}
\email{chamberl@math.grinnell.edu}
\author{Armin Straub}
\address{Department of Mathematics,
University of Illinois at Urbana-Champaign,
1409 W. Green St,
Urbana, IL 6180,
United States}
\curraddr{Max-Planck-Institut f\"ur Mathematik,
Vivatsgasse 7,
53111 Bonn,
Germany}
\email{astraub@illinois.edu}
\begin{document}

\begin{abstract}
  Convergent infinite products, indexed by all natural numbers, in which each
  factor is a rational function of the index, can always be evaluated in terms
  of finite products of gamma functions. This goes back to Euler. A purpose of
  this note is to demonstrate the usefulness of this fact through a number of
  diverse applications involving multiplicative partitions, entries in
  Ramanujan's notebooks, the Chowla--Selberg formula, and the Thue--Morse
  sequence. In addition, we propose a numerical method for efficiently
  evaluating more general infinite series such as the slowly convergent
  Kepler--Bouwkamp constant.
\end{abstract}


\keywords{infinite products; gamma function; Kepler--Bouwkamp constant; Chowla--Selberg formula; Thue--Morse sequence}


\date{May 28, 2013}

\maketitle

\section{Introduction}

Recall that an infinite product $\prod_{k = 1}^{\infty} a (k)$ is said to
converge if the sequence of its partial products converges to a nonzero limit.
In this note we are especially interested in the case when $a (k)$ is a
rational function of $k$. Assuming that the infinite product converges, $a
(k)$ is then necessarily of the form
\begin{equation}
  a (k) = \frac{(k + \alpha_1) \cdots (k + \alpha_n)}{(k + \beta_1) \cdots (k
  + \beta_n)} \label{eq:rat}
\end{equation}
with $\alpha_1, \ldots, \alpha_n$ and $\beta_1, \ldots, \beta_n$ complex
numbers, none of which are negative integers, such that $\alpha_1 + \ldots +
\alpha_n = \beta_1 + \ldots + \beta_n$. To see that this is the case, note
first that clearly $a (k) \rightarrow 1$ as $k \rightarrow \infty$ so that $a
(k)$ can be factored into linear terms as on the right-hand side of
(\ref{eq:rat}). On the other hand, if $a (k) = 1 + c k^{- 1} + O (k^{- 2})$ as
$k \rightarrow \infty$ then convergence of the infinite product (and
divergence of the harmonic series) forces $c = \alpha_1 + \ldots + \alpha_n -
\beta_1 - \ldots - \beta_n = 0$.

These infinite products always have a finite-term evaluation in terms of
Euler's gamma function {\cite[Sec. 12.13]{ww}}.

\begin{theorem}
  \label{thm:prodrat}Let $n \geqslant 1$ be an integer, and let $\alpha_1,
  \ldots, \alpha_n$ and $\beta_1, \ldots, \beta_n$ be nonzero complex numbers,
  none of which are negative integers. If $\alpha_1 + \ldots + \alpha_n =
  \beta_1 + \ldots + \beta_n$, then
  \begin{equation}
    \prod_{k \geqslant 0} \frac{(k + \alpha_1) \cdots (k + \alpha_n)}{(k +
    \beta_1) \cdots (k + \beta_n)} = \frac{\Gamma (\beta_1) \cdots \Gamma
    (\beta_n)}{\Gamma (\alpha_1) \cdots \Gamma (\alpha_n)} .
    \label{eq:prodrat}
  \end{equation}
  Otherwise, the infinite product in (\ref{eq:prodrat}) diverges.
\end{theorem}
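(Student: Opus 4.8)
The plan is to reduce the general statement to the simplest case $n=1$ by factoring the partial products and using the Weierstrass product for the gamma function. First I would write the $N$-th partial product
\[
P_N = \prod_{k=0}^{N} \frac{(k+\alpha_1)\cdots(k+\alpha_n)}{(k+\beta_1)\cdots(k+\beta_n)}
= \prod_{j=1}^{n} \prod_{k=0}^{N} \frac{k+\alpha_j}{k+\beta_j},
\]
so that everything comes down to understanding $\prod_{k=0}^{N}\frac{k+\alpha}{k+\beta}$. Using the Pochhammer/ratio identity $\prod_{k=0}^{N}(k+\alpha) = \Gamma(N+1+\alpha)/\Gamma(\alpha)$, this inner product equals $\frac{\Gamma(\beta)}{\Gamma(\alpha)}\cdot\frac{\Gamma(N+1+\alpha)}{\Gamma(N+1+\beta)}$.

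Next I would analyze the asymptotics of $\Gamma(N+1+\alpha)/\Gamma(N+1+\beta)$ as $N\to\infty$. The clean way is to invoke the Weierstrass product $1/\Gamma(z) = z e^{\gamma z}\prod_{m\geq 1}(1+z/m)e^{-z/m}$, which gives
\[
\frac{\Gamma(N+1+\alpha)}{\Gamma(N+1+\beta)}
= \frac{N+1+\beta}{N+1+\alpha}\, e^{\gamma(\beta-\alpha)} \prod_{m\geq 1}\frac{1+(N+1+\beta)/m}{1+(N+1+\alpha)/m}\, e^{(\alpha-\beta)/m};
\]
alternatively one can quote Stirling's formula in the form $\Gamma(z+a)/\Gamma(z+b)\sim z^{a-b}$ for large $z$, which shows $\Gamma(N+1+\alpha)/\Gamma(N+1+\beta)\sim N^{\alpha-\beta}$. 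Multiplying over $j=1,\ldots,n$ produces a factor $N^{\sum\alpha_j-\sum\beta_j}$ times $\prod_j \Gamma(\beta_j)/\Gamma(\alpha_j)$ times a quantity tending to $1$. Under the hypothesis $\sum\alpha_j=\sum\beta_j$ the power of $N$ is $N^0=1$, so $P_N\to \prod_j\Gamma(\beta_j)/\Gamma(\alpha_j)$, which is (\ref{eq:prodrat}); if instead $\sum\alpha_j\neq\sum\beta_j$ then $N^{\sum\alpha_j-\sum\beta_j}$ either blows up or goes to $0$, and one checks the limit is never a finite nonzero number, so the product diverges.

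The main obstacle is making the asymptotic step rigorous for \emph{complex} parameters without hand-waving, since $N^{\alpha-\beta}$ is genuinely a limit statement that needs justification (and for complex exponent one must be careful that $|N^{it}|=1$ but the argument oscillates, which is exactly why divergence occurs when the real parts differ, and also when only the imaginary parts differ — in that borderline case the partial products fail to converge to a nonzero limit because of persistent rotation). I would handle this by taking logarithms: write $\log P_N = \sum_{j=1}^n\big[\log\Gamma(\beta_j)-\log\Gamma(\alpha_j)\big] + \sum_{j=1}^n\big[\log\Gamma(N+1+\alpha_j)-\log\Gamma(N+1+\beta_j)\big]$ and apply the asymptotic expansion $\log\Gamma(z+a) = (z+a-\tfrac12)\log z - z + \tfrac12\log(2\pi) + o(1)$ (valid as $|z|\to\infty$ in a sector avoiding the negative reals), so that $\sum_j[\log\Gamma(N+1+\alpha_j)-\log\Gamma(N+1+\beta_j)] = (\sum_j\alpha_j-\sum_j\beta_j)\log(N+1) + o(1)$. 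This cleanly yields convergence precisely when $\sum\alpha_j=\sum\beta_j$ and pins down the value, and it exhibits the divergence otherwise; the fact that no reordering or cancellation tricks are needed (the product is genuinely indexed in the natural order) keeps the argument short.
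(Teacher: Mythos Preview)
Your proof is correct and follows essentially the same approach as the paper: factor the partial product over $j$ and control the residual factor $\prod_j \Gamma(N{+}1{+}\alpha_j)/\Gamma(N{+}1{+}\beta_j)$ via a gamma-function asymptotic. The paper's version is a touch slicker because it invokes Euler's limit definition $\Gamma(z)=\lim_{m\to\infty} m^z m!/[z(z+1)\cdots(z+m)]$ directly, so that the needed asymptotic $\prod_{k=0}^m(k+\alpha)/(k+\beta)\sim m^{\alpha-\beta}\,\Gamma(\beta)/\Gamma(\alpha)$ is built in and no separate appeal to Stirling is required.
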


This result is a simple consequence of Euler's infinite product definition
(\ref{eq:gamma-euler}) of the gamma function, see the beginning of Section
\ref{sec:basic}. It is, however, scarcely stated explicitly in the literature.
For instance, while the table {\cite{hansen}} contains several pages of
special cases of (\ref{eq:prodrat}), some of which are rather generic in
nature, it does not list (\ref{eq:prodrat}) or an equivalent version thereof.
An incidental objective of this note is therefore to advertise
(\ref{eq:prodrat}) and to illustrate its usefulness during the course of the
applications given herein.

This note was motivated by a result, discussed in Section \ref{sec:appl},
which recently appeared in {\cite{chamberland-multpart}} as part of a study of
multiplicative partitions. In Section \ref{sec:appl} we also apply Theorem
\ref{thm:prodrat} to two entries in Ramanujan's (lost) notebook
{\cite{rln-4}}.

The first novel contribution of this note may be found in Section
\ref{sec:num}, where we propose an approach to the numerical evaluation of
certain general, not necessarily rational, infinite products, which is based
upon Theorem \ref{thm:prodrat} and Pad\'e approximation. We illustrate this
approach by applying it to the Kepler--Bouwkamp constant, defined as the
infinite product $\prod_{k = 3}^{\infty} \cos \left( \pi / k \right)$. Due to
its infamously slow convergence, various procedures for its numerical
evaluation have been discussed in the literature {\cite{bouwkamp-prod}},
{\cite{grimstone-prod}}, {\cite{stephens-prod}}. The present approach has the
advantage that it does not rely on developing alternative, more rapidly
convergent, expressions for the Kepler--Bouwkamp constant.

In Section \ref{sec:shortgamma} we discuss properties of short gamma quotients
at rational arguments. In particular, we offer an alternative proof of a
result established in {\cite{sandor-gamma}} and {\cite{martin-gamma}}. In the
light of our proof, this result may be interpreted as a (much simpler) version
of the Chowla--Selberg formula {\cite{sc67}} in the case of principal
characters.

Finally, in Section \ref{sec:tm}, consideration of an infinite product defined
in terms of the Thue--Morse sequence naturally leads us to a curious open
problem posed by Shallit.

\section{Proof and basic examples}\label{sec:basic}

We commence with supplying a proof of Theorem \ref{thm:prodrat} and giving a
number of basic examples.

\begin{proof}[Proof of Theorem \ref{thm:prodrat}]
  Euler's definition gives the gamma function as
  \begin{equation}
    \Gamma (z) = \lim_{m \rightarrow \infty} \frac{m^z m!}{z (z + 1) \cdots (z
    + m)}, \label{eq:gamma-euler}
  \end{equation}
  which is valid for all $z \in \mathbbm{C}$ except for negative integers $z$.
  Thus,
  \begin{eqnarray*}
    \prod_{j = 1}^n \frac{\Gamma \left( \beta_j \right)}{\Gamma \left(
    \alpha_j \right)} & = & \lim_{m \rightarrow \infty} \prod_{j = 1}^n
    m^{\beta_j - \alpha_j} \prod_{k = 0}^m \frac{\alpha_j + k}{\beta_j + k}\\
    & = & \lim_{m \rightarrow \infty} \prod_{k = 0}^m \prod_{j = 1}^n
    \frac{\alpha_j + k}{\beta_j + k},
  \end{eqnarray*}
  where, for the second equality, we make use of the fact that the sum of the
  $\alpha_j$ is the same as the sum of the $\beta_j$.
\end{proof}

In a similarly straight-forward manner, see {\cite[Sec. 12.13]{ww}}, an
alternative proof of Theorem \ref{thm:prodrat} follows from the Weierstrassian
infinite product
\[ \frac{1}{\Gamma \left( 1 + z \right)} = e^{\gamma z} \prod_{k \geqslant 1}
   \left( 1 + \frac{z}{k} \right) e^{- z / k} . \]
\begin{example}
  \label{eg:sineprod}In the case $\alpha_1 = z$, $\alpha_2 = - z$, $\beta_1 =
  \beta_2 = 0$, Theorem \ref{thm:prodrat} yields the famous
  \begin{equation}
    \prod_{k \geqslant 1} \left( 1 - \frac{z^2}{k^2} \right) = \frac{1}{\Gamma
    \left( 1 - z \right) \Gamma \left( 1 + z \right)} = \frac{\sin \left( \pi
    z \right)}{\pi z} . \label{eq:sine-euler}
  \end{equation}
  Similarly, one finds, for non-integral $z$, the slightly less well-known
  \[ \prod_{k \geqslant 1} \frac{\left( k - z \right) \left( k + z - 1
     \right)}{\left( k - 1 / 2 \right)^2} = \frac{\Gamma \left( 1 / 2
     \right)^2}{\Gamma \left( 1 - z \right) \Gamma \left( z \right)} = \sin
     \left( \pi z \right) . \]
  Both representations clearly reflect the reflection formula,
  \begin{equation}
    \Gamma (z) \Gamma (1 - z) = \frac{\pi}{\sin (\pi z)}, \label{eq:gammasin}
  \end{equation}
  for the gamma function.
\end{example}

\begin{example}
  \label{eg:wallis}Theorem \ref{thm:prodrat} also gives an immediate proof of
  Wallis' product
  \[ \frac{2 \cdot 2}{1 \cdot 3} \cdot \frac{4 \cdot 4}{3 \cdot 5} \cdot
     \frac{6 \cdot 6}{5 \cdot 7} \cdots = \prod_{k \geqslant 0} \frac{(2 k +
     2) (2 k + 2)}{(2 k + 1) (2 k + 3)} = \frac{\Gamma (1 / 2) \Gamma (3 /
     2)}{\Gamma (1) \Gamma (1)} = \frac{\pi}{2} . \]
  Alternatively, this evaluation can be seen as a corollary to
  (\ref{eq:sine-euler}). Several generalizations of Wallis' product, similarly
  based on Theorem \ref{thm:prodrat}, are discussed in the recent
  {\cite{sondow-wallis}} and {\cite{bhr-wallis}}.
\end{example}

\begin{example}
  As another illustration of Theorem \ref{thm:prodrat}, we evaluate
  \[ \prod_{k \geqslant 1} \left( 1 - \frac{(- 1)^k}{(2 k + 1)^3} \right) . \]
  This is Entry 89.6.12 in {\cite{hansen}} where it is incorrectly listed with
  value $\pi^3 / 32$, but is corrected in an erratum. Let $\rho = (3 + i
  \sqrt{3}) / 8$. The correct, though more involved, value is
  \begin{eqnarray*}
    \prod_{k \geqslant 1} \left( 1 - \frac{(- 1)^k}{(2 k + 1)^3} \right) & = &
    \prod_{k \geqslant 1} \left( 1 - \frac{1}{(4 k + 1)^3} \right) \prod_{k
    \geqslant 1} \left( 1 + \frac{1}{(4 k - 1)^3} \right)\\
    & = & \prod_{k \geqslant 1} \frac{k (k + \rho) (k + \bar{\rho})}{(k + 1 /
    4)^3}  \prod_{k \geqslant 1} \frac{k (k - \rho) (k - \bar{\rho})}{(k - 1 /
    4)^3}\\
    & = & \frac{\Gamma (1 + 1 / 4)^3}{\Gamma (1 + \rho) \Gamma (1 +
    \bar{\rho})}  \frac{\Gamma (1 - 1 / 4)^3}{\Gamma (1 - \rho) \Gamma (1 -
    \bar{\rho})}\\
    & = & \left( \frac{\pi / 4}{\sin (\pi / 4)} \right)^3  \frac{\sin (\pi
    \rho)}{\pi \rho}  \frac{\sin (\pi \bar{\rho})}{\pi \bar{\rho}}\\
    & = & \frac{\pi}{12} \left[ 1 + \sqrt{2} \cosh \left( \frac{\sqrt{3}}{4}
    \pi \right) \right] 
  \end{eqnarray*}
  For the last equality we used that $| \sin (x + i y) |^2 = (\cosh (2 y) -
  \cos (2 x)) / 2$.
\end{example}

\begin{example}
  This example briefly indicates that Theorem \ref{thm:prodrat} also applies
  to infinite products with individual terms removed, for instance, for
  convergence. Let $\xi_n = e^{2 \pi i / n}$ and $z \nin \{0, 1, 2, \ldots\}$.
  The evaluation
  \[ \prod_{k \geqslant 0} \frac{k^n - z^n}{k^n + z^n} = \prod_{j = 1}^{2 n}
     \Gamma (z \xi_{2 n}^j)^{(- 1)^{j + 1}} \]
  follows from Theorem \ref{thm:prodrat} as well. Let $m$ be a nonnegative
  integer. Since the residue of the gamma function $\Gamma (z)$ at $z = - m$
  is $(- 1)^m / m!$, we have, as $z \rightarrow m$,
  \[ (m^n - z^n) \Gamma (- z) \rightarrow \frac{(- 1)^m}{m!} n m^{n - 1}, \]
  and hence
  \[ \prod_{k \geqslant 0, k \neq m} \frac{k^n - m^n}{k^n + m^n} = (- 1)^m m!
     \frac{2 m}{n} \prod_{j = 1}^{2 n - 1} \Gamma (- m \xi_{2 n}^j)^{(- 1)^{j
     + 1}} . \]
  This example is discussed in much more detail in {\cite[Section 1.2]{bbg}},
  where it is also noted that the gamma functions can be replaced by
  trigonometric functions when $n$ is even.
\end{example}

\section{Further applications}\label{sec:appl}

\subsection{Multiplicative partitions}

The original motivation for this note was the following result, which recently
appeared in {\cite{chamberland-multpart}} as part of a study of multiplicative
partitions.

\begin{theorem}
  {\dueto{{\cite[Theorem 4.2]{chamberland-multpart}}}}\label{thm:marc}For
  integers $n \geqslant 2$,
  \[ \prod_{k \geqslant 2} \frac{1}{1 - k^{- n}} = \prod_{j = 1}^{n - 1}
     \Gamma \left( 2 - \xi_n^j \right) = n \prod_{j = 1}^{n - 1} \Gamma \left(
     1 - \xi_n^j \right) \]
  where $\xi_n = e^{2 \pi i / n}$.
\end{theorem}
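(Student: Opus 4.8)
The plan is to reduce everything to Theorem~\ref{thm:prodrat} by factoring $1-k^{-n}$ over the $n$-th roots of unity. First I would write, for $k\geqslant 2$,
\[
  1 - k^{-n} = \frac{k^n-1}{k^n} = \frac{\prod_{j=0}^{n-1}(k-\xi_n^j)}{k^n}
  = \prod_{j=0}^{n-1}\frac{k-\xi_n^j}{k},
\]
since $\xi_n^j$, $j=0,\ldots,n-1$, are exactly the roots of $x^n-1$. Hence
\[
  \prod_{k\geqslant 2}\frac{1}{1-k^{-n}} = \prod_{k\geqslant 2}\prod_{j=0}^{n-1}\frac{k}{k-\xi_n^j}.
\]
The $j=0$ factor is $\prod_{k\geqslant 2}\frac{k}{k-1}$, which diverges, and the individual products over $k$ for $j\neq 0$ also diverge; the point is that only the combined product converges, so I must not split the $j$-product prematurely.

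Next I would massage the combined product into the normalized shape of Theorem~\ref{thm:prodrat}. Shifting the index to start at $k=2$ rather than $k=0$, or equivalently writing $k = m+2$, turns each linear factor $k-\xi_n^j$ into $m + (2-\xi_n^j)$ and each $k$ in the numerator into $m+2$. Thus
\[
  \prod_{k\geqslant 2}\frac{1}{1-k^{-n}}
  = \prod_{m\geqslant 0}\frac{(m+2)^n}{\prod_{j=0}^{n-1}\bigl(m+(2-\xi_n^j)\bigr)}.
\]
The exponent-sum condition of Theorem~\ref{thm:prodrat} holds: the numerator parameters sum to $2n$, and the denominator parameters sum to $\sum_{j=0}^{n-1}(2-\xi_n^j) = 2n - \sum_{j=0}^{n-1}\xi_n^j = 2n - 0 = 2n$, using that the $n$-th roots of unity sum to zero (for $n\geqslant 2$). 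None of the parameters $2$ or $2-\xi_n^j$ is a nonpositive integer: the only way $2-\xi_n^j$ could be a nonpositive integer is $\xi_n^j=2$, impossible, or $\xi_n^j \geqslant 2$, also impossible on the unit circle. So Theorem~\ref{thm:prodrat} applies and gives
\[
  \prod_{k\geqslant 2}\frac{1}{1-k^{-n}}
  = \frac{\prod_{j=0}^{n-1}\Gamma\bigl(2-\xi_n^j\bigr)}{\Gamma(2)^n}
  = \prod_{j=0}^{n-1}\Gamma\bigl(2-\xi_n^j\bigr),
\]
since $\Gamma(2)=1$. The $j=0$ term is $\Gamma(2-1)=\Gamma(1)=1$, so this collapses to $\prod_{j=1}^{n-1}\Gamma(2-\xi_n^j)$, which is the first claimed equality.

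For the second equality I would use the functional equation $\Gamma(z+1)=z\,\Gamma(z)$ with $z = 1-\xi_n^j$, giving $\Gamma(2-\xi_n^j) = (1-\xi_n^j)\,\Gamma(1-\xi_n^j)$. Therefore
\[
  \prod_{j=1}^{n-1}\Gamma\bigl(2-\xi_n^j\bigr)
  = \Bigl(\prod_{j=1}^{n-1}(1-\xi_n^j)\Bigr)\prod_{j=1}^{n-1}\Gamma\bigl(1-\xi_n^j\bigr),
\]
and the scalar prefactor is the classical evaluation $\prod_{j=1}^{n-1}(1-\xi_n^j) = n$, which follows from $\frac{x^n-1}{x-1} = \prod_{j=1}^{n-1}(x-\xi_n^j)$ evaluated at $x=1$. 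This yields the stated $n\prod_{j=1}^{n-1}\Gamma(1-\xi_n^j)$.

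I do not anticipate a genuine obstacle here; the only subtlety, which deserves an explicit remark, is the bookkeeping around divergence: neither $\prod_{k\geqslant 2}\frac{k}{k-\xi_n^j}$ for fixed $j$ nor the $j=0$ factor converges on its own, so one must keep the product over $j$ intact and verify the exponent-sum hypothesis of Theorem~\ref{thm:prodrat} before applying it. Once that is observed, the rest is the two routine identities $\sum_{j=0}^{n-1}\xi_n^j=0$ and $\prod_{j=1}^{n-1}(1-\xi_n^j)=n$ together with $\Gamma(z+1)=z\Gamma(z)$.
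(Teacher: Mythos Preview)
Your proof is correct and follows exactly the approach of the paper: apply Theorem~\ref{thm:prodrat} to the rational function $\frac{(k+2)^n}{(k+2)^n-1}$ (which is precisely what your index shift $k=m+2$ accomplishes), and then use $\Gamma(x+1)=x\Gamma(x)$ together with $\prod_{j=1}^{n-1}(1-\xi_n^j)=n$ for the second equality. You have simply made explicit the details the paper leaves to the reader, including the verification of the exponent-sum hypothesis and the caution about not splitting the $j$-product prematurely.
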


\begin{proof}
  Apply Theorem \ref{thm:prodrat} to the rational function $\frac{(k +
  2)^n}{(k + 2)^n - 1}$ to obtain the first equality. For the second part,
  recall that $\Gamma (x + 1) = x \Gamma (x)$.
\end{proof}

To put Theorem \ref{thm:marc} into context, let $a_n$ denote the number of
multiplicative partitions of the natural number $n$. For instance, $a_{18} =
4$ because $18 = 2 \cdot 9 = 2 \cdot 3 \cdot 3 = 3 \cdot 6$. In analogy with
Euler's infinite product formula for the zeta function, the Dirichlet
generating series for the $a_n$ is the product
\[ \sum_{n = 1}^{\infty} \frac{a_n}{n^s} = \prod_{k \geqslant 2} (1 + k^{- s}
   + k^{- 2 s} + \ldots) = \prod_{k \geqslant 2} \frac{1}{1 - k^{- s}} . \]
The values in Theorem \ref{thm:marc} are values of this Dirichlet series at
positive integers and, as such, analogous to the zeta values $\zeta (n)$.

\subsection{A product considered by Ramanujan}

In {\cite{ramanujan-prod15}}, see also {\cite[Chapter 16]{rln-4}}, Ramanujan
considers the product
\begin{equation}
  \phi (\alpha, \beta) = \prod_{n = 1}^{\infty} \left\{ 1 + \left(
  \frac{\alpha + \beta}{n + \alpha} \right)^3 \right\}, \label{eq:rama:phi}
\end{equation}
and shows that $\phi (\alpha, \beta)$ can be expressed in ``finite terms''
when the difference $\alpha - \beta$ is an integer. If one includes values of
the gamma function in the notion of finite term, then the product
(\ref{eq:rama:phi}) can always be evaluated in finite terms. Indeed, using the
factorization
\[ 1 + \left( \frac{\alpha + \beta}{n + \alpha} \right)^3 = \frac{(n + 2
   \alpha + \beta) \left( n + \frac{\alpha - \beta + (\alpha + \beta) \sqrt{-
   3}}{2} \right) \left( n + \frac{\alpha - \beta - (\alpha + \beta) \sqrt{-
   3}}{2} \right)}{(n + \alpha)^3}, \]
we have, by Theorem \ref{thm:prodrat},
\begin{equation}
  \phi (\alpha, \beta) = \frac{\Gamma \left( 1 + \alpha \right)^3}{\Gamma (1 +
  2 \alpha + \beta) \Gamma \left( 1 + \frac{\alpha - \beta + i (\alpha +
  \beta) \sqrt{3}}{2} \right) \Gamma \left( 1 + \frac{\alpha - \beta - i
  (\alpha + \beta) \sqrt{3}}{2} \right)} . \label{eq:rama:phigamma}
\end{equation}
We observe that, if $\alpha - \beta$ is an integer, then the product
\[ \Gamma \left( 1 + \frac{\alpha - \beta + i (\alpha + \beta) \sqrt{3}}{2}
   \right) \Gamma \left( 1 + \frac{\alpha - \beta - i (\alpha + \beta)
   \sqrt{3}}{2} \right) \]
can be simplified using the reflection formula (\ref{eq:gammasin}). As noted
by Ramanujan, one arrives at an evaluation of $\phi (\alpha, \beta)$ in terms
of hyperbolic functions only.

\begin{example}
  \label{eg:phi-aa}In the case $\alpha = \beta$, we have from
  (\ref{eq:rama:phigamma})
  \begin{eqnarray}
    \phi (\alpha, \alpha) & = & \frac{\Gamma (1 + \alpha)^3}{\Gamma (1 + 3
    \alpha) \Gamma \left( 1 + i \alpha \sqrt{3} \right) \Gamma \left( 1 - i
    \alpha \sqrt{3} \right)} \nonumber\\
    & = & \frac{\Gamma (1 + \alpha)^3}{\Gamma (1 + 3 \alpha)}  \frac{\sinh
    (\pi \alpha \sqrt{3})}{\pi \alpha \sqrt{3}}, 
  \end{eqnarray}
  as in {\cite[equation (6)]{ramanujan-prod15}} and {\cite[equation
  (16.3.1)]{rln-4}}.
\end{example}

\subsection{Another product considered by Ramanujan}

In the next example we employ Theorem \ref{thm:prodrat} to rewrite an
integral, considered by Ramanujan in his lost notebook {\cite{rln-4}}. By
doing so, we obtain a Mellin--Barnes integral which provides further context
for the integral evaluation and leads to natural generalizations.

In {\cite[Entry 4.9.1]{rln-4}} the following integral due to Ramanujan is
recorded:
\begin{eqnarray}
  &  & \int_0^{\infty} \left( \frac{1 + x^2 / b^2}{1 + x^2 / a^2} \right)
  \left( \frac{1 + x^2 / (b + 1)^2}{1 + x^2 / (a + 1)^2} \right) \left(
  \frac{1 + x^2 / (b + 2)^2}{1 + x^2 / (a + 2)^2} \right) \cdots \mathd x
  \nonumber\\
  & = & \frac{\sqrt{\pi}}{2}  \frac{\Gamma (a + 1 / 2) \Gamma (b) \Gamma (b -
  a - 1 / 2)}{\Gamma (a) \Gamma (b - 1 / 2) \Gamma (b - a)}, 
  \label{eq:rama491}
\end{eqnarray}
where $0 < a < b - \tfrac{1}{2}$. Using the factorization
\[ \frac{1 + x^2 / (b + k)^2}{1 + x^2 / (a + k)^2} = \frac{(k + a)^2 (k + b +
   i x) (k + b - i x)}{(k + b)^2 (k + a + i x) (k + a - i x)}, \]
as well as Theorem \ref{thm:prodrat}, the integrand can be rewritten as
\[ \prod_{k = 0}^{\infty} \frac{1 + x^2 / (b + k)^2}{1 + x^2 / (a + k)^2} =
   \frac{\Gamma (b)^2 \Gamma (a + i x) \Gamma (a - i x)}{\Gamma (a)^2 \Gamma
   (b + i x) \Gamma (b - i x)} . \]
Equation \ref{eq:rama491} is therefore equivalent to the Mellin--Barnes
integral
\begin{equation}
  \frac{1}{2 \pi i} \int_{- i \infty}^{i \infty} \frac{\Gamma (a + s) \Gamma
  (a - s)}{\Gamma (b + s) \Gamma (b - s)} \mathd s = \frac{1}{2 \sqrt{\pi}} 
  \frac{\Gamma (a)}{\Gamma (b)}  \frac{\Gamma (a + 1 / 2) \Gamma (b - a - 1 /
  2)}{\Gamma (b - 1 / 2) \Gamma (b - a)} . \label{eq:rama-491b}
\end{equation}
We note that, using the duplication formula
\begin{equation}
  \Gamma (2 z) = \frac{2^{2 z - 1}}{\sqrt{\pi}} \Gamma (z) \Gamma \left( z +
  \tfrac{1}{2} \right), \label{eq:gammadup}
\end{equation}
the right-hand side of (\ref{eq:rama-491b}) can be simplified to yield
\[ \frac{1}{2 \pi i} \int_{- i \infty}^{i \infty} \frac{\Gamma (a + s) \Gamma
   (a - s)}{\Gamma (b + s) \Gamma (b - s)} \mathd s = \frac{\Gamma (2 a)
   \Gamma (2 b - 2 a - 1)}{\Gamma (b - a)^2 \Gamma (2 b - 1)} . \]
In this final, somewhat more canonical, form it is straight-forward to find
natural generalizations in the literature, such as
\[ \frac{1}{2 \pi i} \int_{- i \infty}^{i \infty} \frac{\Gamma (a + s) \Gamma
   (c - s)}{\Gamma (b + s) \Gamma (d - s)} \mathd s = \frac{\Gamma (a + c)
   \Gamma (b + d - a - c - 1)}{\Gamma (b - a) \Gamma (d - c) \Gamma (b + d -
   1)}, \]
which is proved in {\cite{pk-mellinbarnes}} using Parseval's formula.

\section{Numerical applications}\label{sec:num}

\subsection{Numerical evaluations of products}\label{sec:kb}

In this section, we consider general infinite products $\prod_{k = 1}^{\infty}
a (k)$, where $a (k)$ is not necessarily a rational function. The goal is to
present a simple yet efficient way to obtain accurate numerical evaluations of
such infinite products for certain $a (k)$, even when the original product
converges very slowly. The approach is based on approximating $a (k)$ by a
rational function and using Theorem \ref{thm:prodrat} to express the result as
a finite product of gamma functions.

We illustrate this approach for the Kepler--Bouwkamp constant {\cite[Sec.
6.3]{finch-constants}}
\begin{equation}
  \prod_{k = 3}^{\infty} \cos \left( \frac{\pi}{k} \right) =
  0.1149420448532962 \ldots \label{eq:kb}
\end{equation}
This constant is motivated by a geometric construction. Start with a circle of
unit radius and inscribe an equilateral triangle, inscribe the triangle with
another circle which is then inscribed with a square, inscribe the square with
yet another circle which is inscribed with a regular pentagon, and so on as in
Figure \ref{fig:kb}. The radii of the inscribing circles then approach a limit
which is the Kepler--Bouwkamp constant (\ref{eq:kb}). For further references
and the history of this constant, including various approaches to its
numerical computation, we refer to {\cite{bouwkamp-prod}}, {\cite[Sec.
6.3]{finch-constants}}, {\cite{grimstone-prod}} and {\cite{stephens-prod}}.

The product (\ref{eq:kb}), however, converges rather slowly. For instance,
truncating the product after $10^4$ terms only results in four correct digits.
The usual approach to computing the Kepler--Bouwkamp constant {\emdash} taken,
for instance, in {\cite{bouwkamp-prod}} and {\cite{stephens-prod}} {\emdash}
is to first develop more rapidly converging expressions for (\ref{eq:kb}). On
the other hand, we will demonstrate how one can use (\ref{eq:kb}) to evaluate
the Kepler--Bouwkamp constant in a completely automated way to, say, $100$
digits in a matter of a few seconds.

\begin{figure}[h]
  \begin{center}
    \includegraphics[width=5cm]{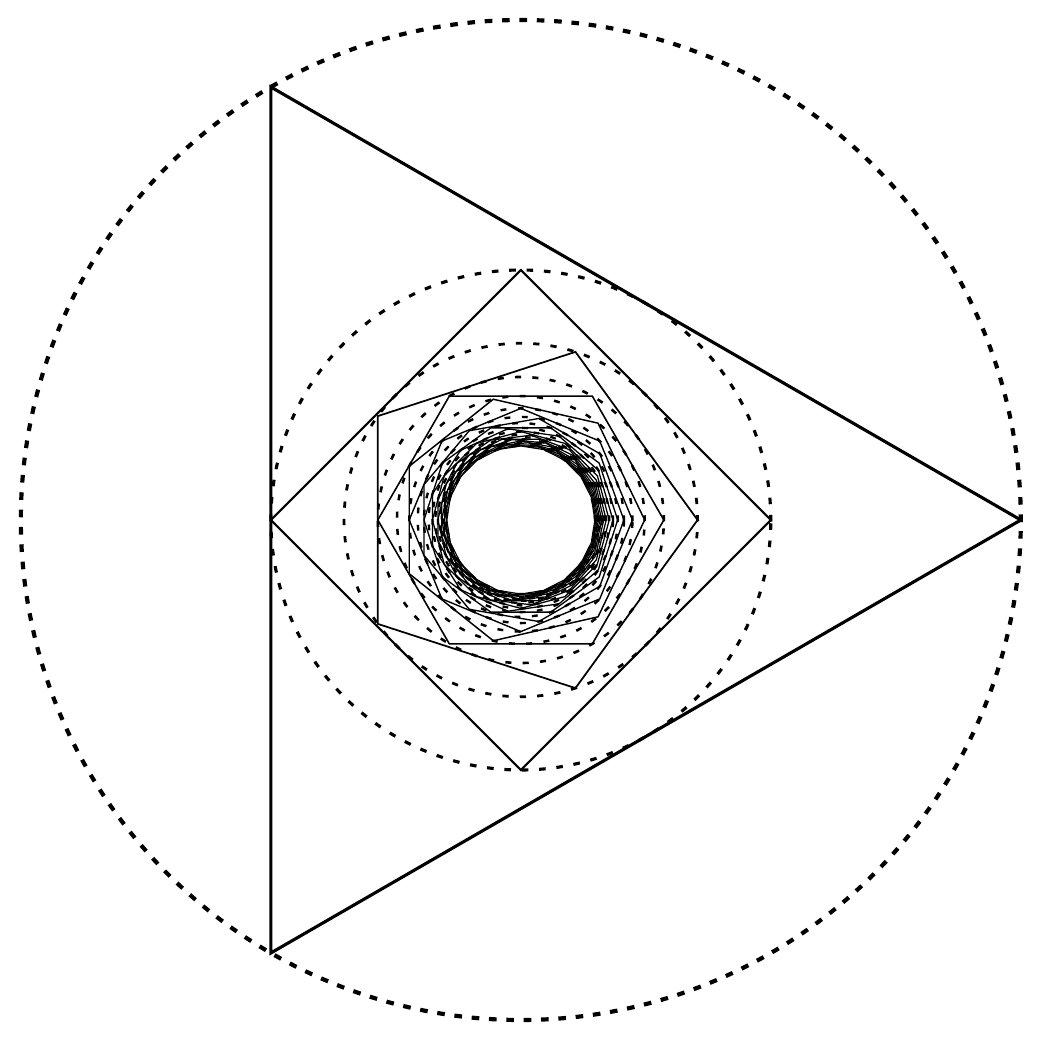}
    \caption{\label{fig:kb}Kepler--Bouwkamp constant as ratio of inner circle
    and outer circle}
  \end{center}
\end{figure}

A natural choice for approximating a function $f (x)$, such as $\cos (\pi x)$,
by a rational function is to use a Pad\'e approximant {\cite[Chapter
4]{handbook-contfrac}}. The Pad\'e approximant of order $[m, n]$ is the
rational function of numerator degree $m$ and denominator degree $n$, whose
Maclaurin series agrees with the one of $f (x)$ to order $m + n$ (the highest
possible order). For our purposes we are interested only in Pad\'e
approximants of order $[n, n]$. For instance, the $[2, 2]$ Pad\'e approximant
of $\cos (x)$ is
\[ r_2 (x) = \frac{12 - 5 x^2}{12 + x^2} = \cos (x) + O (x^6) . \]
We can now approximate the product (\ref{eq:kb}) by the corresponding product
of rational functions, which, using Theorem \ref{thm:prodrat}, evaluates to a
finite product of gamma functions:
\[ \prod_{k = 3}^{\infty} r_2 \left( \tfrac{\pi}{k} \right) = \prod_{k =
   3}^{\infty} \frac{12 k^2 - 5 \pi^2}{12 k^2 + \pi^2} = \frac{\Gamma \left( 3
   - \tfrac{i}{6} \sqrt{3} \pi \right) \Gamma \left( 3 + \tfrac{i}{6} \sqrt{3}
   \pi \right)}{\Gamma \left( 3 - \tfrac{1}{6} \sqrt{15} \pi \right) \Gamma
   \left( 3 + \tfrac{1}{6} \sqrt{15} \pi \right)}, \]
This approximation agrees with (\ref{eq:kb}) to three decimal digits. More
accurate approximations can be obtained by using the Pad\'e approximant only
for $k \geqslant N$, thus approximating (\ref{eq:kb}) with
\[ \left[ \prod_{k = 3}^{N - 1} \cos \left( \tfrac{\pi}{k} \right) \right]
   \left[ \prod_{k = N}^{\infty} r_2 \left( \tfrac{\pi}{k} \right) \right] =
   \left[ \prod_{k = 3}^{N - 1} \cos \left( \tfrac{\pi}{k} \right) \right]
   \frac{\Gamma \left( N - \tfrac{i}{6} \sqrt{3} \pi \right) \Gamma \left( N +
   \tfrac{i}{6} \sqrt{3} \pi \right)}{\Gamma \left( N - \tfrac{1}{6} \sqrt{15}
   \pi \right) \Gamma \left( N + \tfrac{1}{6} \sqrt{15} \pi \right)} . \]
For $N = 10$ this results in $6$ correct digits, and $11$ correct digits for
$N = 100$. Of course, more accurate approximations are obtained if the order
$n$ of the Pad\'e approximant is increased. Table \ref{tbl:kbappr} shows the
number of correct decimal digits that one obtains for various modest choices
of $n$ and $N$ (instead of truncating the entries of Table \ref{tbl:kbappr} to
integers, we adopt the convention that two numbers $A$ and $B$ agree to $-
\log_{10} |A - B|$ decimal digits). None of the computations took more than 5
seconds on a usual laptop using Mathematica 7 and without optimizing the
computation.

\begin{table}[h]
  \begin{center}
  \begin{tabular}{|c|c|c|c|c|c|c|}
    \hline
    $N$ & $3$ & $4$ & $5$ & $10$ & $100$ & $1000$\\
    \hline
    $n = 2$ & $3.19$ & $4.00$ & $4.57$ & $6.22$ & $11.3$ & $16.3$\\
    \hline
    $n = 4$ & $6.87$ & $8.22$ & $9.21$ & $12.1$ & $21.3$ & $30.3$\\
    \hline
    $n = 6$ & $11.2$ & $13.1$ & $14.5$ & $18.7$ & $31.9$ & $45.0$\\
    \hline
    $n = 8$ & $16.1$ & $18.5$ & $20.3$ & $25.7$ & $43.0$ & $60.1$\\
    \hline
    $n = 10$ & $21.4$ & $24.3$ & $26.5$ & $33.1$ & $54.5$ & $75.5$\\
    \hline
    $n = 12$ & $27.0$ & $30.4$ & $33.0$ & $40.8$ & $66.2$ & $91.3$\\
    \hline
    $n = 14$ & $32.9$ & $36.8$ & $39.7$ & $48.8$ & $78.3$ & $107.$\\
    \hline
    $n = 16$ & $39.0$ & $43.4$ & $46.7$ & $57.0$ & $90.5$ & $124.$\\
    \hline
  \end{tabular}
  \end{center}
  \caption{\label{tbl:kbappr}Number of correct digits for various
  approximations to (\ref{eq:kb})}
\end{table}

We note that this method for approximating an infinite product $\prod_{k =
1}^{\infty} a (k)$ only relies on computing a Pad\'e approximant for the
factor $a (k)$, numerically finding the zeros and poles of this rational
function in order to apply Theorem \ref{thm:prodrat}, and numerically
evaluating the resulting finite product of gamma functions. All these
operations are efficiently and easily available in any computer algebra
system. This makes the present approach a rather versatile tool in numerically
evaluating a number of slowly converging infinite products.

\begin{remark}
  Because of their special relevance in number theory it is natural to ask if
  the above numerical procedure can be applied to products indexed by primes,
  such as Artin's constant
  \[ \prod_{\text{$p$ prime}} \left( 1 - \frac{1}{p \left( p - 1 \right)}
     \right) \approx 0.373956. \]
  However, no immediate transfer appears available. Fortunately, a very
  efficient method for numerically evaluating such products has been developed
  in {\cite{moree-prod}}. We cannot resist to remark that, for Artin's
  constant, the corresponding product over all integers is given by
  \[ \prod_{n \geqslant 2} \left( 1 - \frac{1}{n \left( n - 1 \right)} \right)
     = - \frac{1}{\pi} \cos \left( \frac{\sqrt{5}}{2} \pi \right) \approx
     0.296675. \]
\end{remark}

\subsection{Numerical evaluations of sums}

Note that series $\sum_{k = 0}^{\infty} a_k$ are related to products via the
obvious
\begin{equation}
  \sum_{k = 0}^{\infty} a_k = \log \left( \prod_{k = 0}^{\infty} \exp (a_k)
  \right) . \label{eq:sumprod}
\end{equation}
The approach for numerically evaluating infinite products may thus also be
applied to certain series.

In case of the exponential function, explicit formulas exist {\cite[Example
4.2.2]{handbook-contfrac}} for the Pad\'e approximants of any order. In
particular, letting
\[ f_n (x) = \sum_{j = 0}^n \frac{(2 n - j) ! n!}{(2 n) ! j! (n - j) !} x^j =
   \sum_{j = 0}^n \frac{\binom{n}{j}}{\binom{2 n}{j}}  \frac{x^j}{j!}, \]
the $[n, n]$ Pad\'e approximation of $\exp (x)$ is given by
\[ \frac{f_n (x)}{f_n (- x)} . \]
In fact, as explained in the lovely article {\cite{cohn-e}}, these
approximations to the exponential function are already implicit in Hermite's
1873 paper on the transcendence of $e$, and thus predate the systematic study
of Pad\'e approximations by Hermite's eponymous student Pad\'e in his 1892
thesis.

For instance, in the case of the Riemann zeta function we obtain the
approximations
\begin{equation}
  \zeta (m) = \sum_{k = 1}^{\infty} \frac{1}{k^m} \approx \log \left( \prod_{k
  = 1}^{\infty} \frac{f_n (k^{- m})}{f_n (- k^{- m})} \right) = : \zeta_n (m)
  \label{eq:zetanm}
\end{equation}
by replacing the exponential function by its $[n, n]$ Pad\'e approximation in
(\ref{eq:sumprod}). Table \ref{tbl:zetaappr} gives some indication on the
quality of this approximation (again, we say that two numbers $A$ and $B$
agree to $- \log_{10} |A - B|$ decimal digits). For instance, $\zeta_{10} (3)$
agrees with $\zeta (3)$ to $25$ digits. Indeed, a few computations quickly
suggest that, for any integer $m \geqslant 2$, $\zeta_{10} (m)$ agrees with
$\zeta (m)$ to $25$ decimal digits. Analogously, the data of Table
\ref{tbl:zetaappr} (to the precision given) applies for any integer $m
\geqslant 2$. This observation is, to some degree, explained next.

\begin{table}[h]
  \begin{center}
  \begin{tabular}{|c|c|c|c|c|c|c|c|c|c|}
    \hline
    $n$ & $2$ & $3$ & $4$ & $5$ & $6$ & $7$ & $8$ & $9$ & $10$\\
    \hline
    $m = 3$ & $2.83$ & $4.99$ & $7.39$ & $9.99$ & $12.8$ & $15.6$ & $18.7$ &
    $21.8$ & $25.0$\\
    \hline
  \end{tabular}
  \end{center}
  \caption{\label{tbl:zetaappr}Number of digits of $\zeta_n (3)$ that agree
  with $\zeta (3)$}
\end{table}

Clearly, we have $\zeta (m) \rightarrow 1$ as $m \rightarrow \infty$. On the
other hand, we see from (\ref{eq:zetanm}) that
\[ \lim_{m \rightarrow \infty} \zeta_n (m) = \log \left( \frac{f_n (1)}{f_n (-
   1)} \right) . \]
For instance,
\begin{equation}
  \lim_{m \rightarrow \infty} \zeta_3 (m) = \log \left( \frac{193}{71} \right)
  \approx 1.000010312 \label{eq:zetalim2},
\end{equation}
which elucidates the entry $4.99 \approx - \log_{10} |1 - \log (193 / 71) |$
for $n = 3$ in Table \ref{tbl:zetaappr}. The fact that the rational numbers
$f_n (1) / f_n (- 1)$, including $19 / 7$ for $n = 2$ and $193 / 71$ for $n =
3$, are convergents to the continued fraction of Euler's number $e$ is further
discussed in {\cite{cohn-e}}.

\section{Short gamma quotients}\label{sec:shortgamma}

In this section we discuss a few properties of gamma quotients whose arguments
are rational numbers. While no closed forms are known for $\Gamma (1 / n)$
when $n > 2$, it turns out that surprisingly short products of gamma functions
at rational arguments have simple evaluations {\cite{zucker-gamma}},
{\cite{sandor-gamma}}, {\cite{bz92}}, {\cite{vidunas-gamma}},
{\cite{martin-gamma}}, {\cite{nijenhuis-gamma}}. For instance, as proposed in
{\cite{glasser-11426}} and shown in {\cite{nijenhuis-gamma}},
\begin{equation}
  \Gamma \left( \frac{1}{14} \right) \Gamma \left( \frac{9}{14} \right) \Gamma
  \left( \frac{11}{14} \right) = 4 \pi^{3 / 2} . \label{eq:gammaprod-14}
\end{equation}
Let $\Phi (n)$ denote the set of integers between $1$ and $n$ which are
coprime to $n$. For example, $\Phi (12) = \{1, 5, 7, 11\}$. Let $\phi (n)$
represent the totient function, that is, the size of $\Phi (n)$. The general
result we present in Theorem \ref{thm:gammaprod-phi} below has been
established in {\cite{sandor-gamma}} and, independently, in
{\cite{martin-gamma}} (a nice generalization is developed in
{\cite{nijenhuis-gamma}}, see Remark \ref{rk:nij}). In both cases, the proof
is based on M\"obius inversion. We offer an alternative proof which rests upon
Lerch's identity {\cite[Theorem 1.3.4]{aar}}
\begin{equation}
  \log \Gamma (x) = \left. \frac{\partial}{\partial s} \zeta (s, x) \right|_{s
  = 0} + \frac{1}{2} \log (2 \pi), \label{eq:lerch}
\end{equation}
where $\zeta (s, x) = \sum_{n = 0}^{\infty} \frac{1}{(n + x)^s}$ is the
Hurwitz zeta function.

\begin{theorem}
  \label{thm:gammaprod-phi}If $n$ is not a prime power, then
  \begin{equation}
    \prod_{k \in \Phi (n)} \Gamma \left( \frac{k}{n} \right) = (2 \pi)^{\phi
    (n) / 2} . \label{eq:gammaprod-phi}
  \end{equation}
  If $n = p^m$ for a prime $p$, then (\ref{eq:gammaprod-phi}) holds with the
  right-hand side divided by $\sqrt{p}$.
\end{theorem}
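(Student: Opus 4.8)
The plan is to pass to logarithms and apply Lerch's identity (\ref{eq:lerch}) termwise. Summing over $k\in\Phi(n)$ (a finite sum, so there is no convergence issue) gives
\[
 \sum_{k\in\Phi(n)}\log\Gamma\!\left(\tfrac{k}{n}\right)
 =\left.\frac{\partial}{\partial s}Z(s)\right|_{s=0}+\frac{\phi(n)}{2}\log(2\pi),
 \qquad
 Z(s):=\sum_{k\in\Phi(n)}\zeta\!\left(s,\tfrac{k}{n}\right),
\]
so the whole statement reduces to understanding the behaviour of $Z(s)$ at $s=0$.

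The next step is to identify $Z(s)$ with an elementary Dirichlet series. For $\operatorname{Re}s>1$, writing $\zeta(s,k/n)=n^s\sum_{m\geqslant0}(mn+k)^{-s}$ and observing that, as $k$ runs over $\Phi(n)$ and $m$ over $\{0,1,2,\dots\}$, the numbers $mn+k$ run exactly once through the positive integers coprime to $n$, one obtains
\[
 Z(s)=n^s\sum_{\substack{j\geqslant1\\\gcd(j,n)=1}}\frac{1}{j^s}
 =n^s\,\zeta(s)\prod_{p\mid n}\bigl(1-p^{-s}\bigr),
\]
the last equality being the usual Euler-product bookkeeping $\sum_{\gcd(j,n)=1}j^{-s}=\zeta(s)\prod_{p\mid n}(1-p^{-s})$. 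Although derived for $\operatorname{Re}s>1$, this identity persists in a neighbourhood of $s=0$: both sides are meromorphic (finitely many Hurwitz zeta values on the left; $n^s\zeta(s)\prod_{p\mid n}(1-p^{-s})$ on the right) and regular at $s=0$, so they agree there by analytic continuation, and hence so do their derivatives.

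It remains to read off $Z'(0)$. Let $r$ be the number of distinct primes dividing $n$ (so $r\geqslant1$ as $n\geqslant2$). Each factor $1-p^{-s}$ has a simple zero at $s=0$, since its derivative there equals $\log p\neq0$, whereas $n^s\zeta(s)$ is analytic and nonvanishing at $s=0$ because $\zeta(0)=-\tfrac12$. Therefore $Z(s)$ vanishes to order exactly $r$ at the origin. If $n$ is not a prime power then $r\geqslant2$, so $Z(s)=O(s^2)$, whence $Z'(0)=0$ and $\sum_{k\in\Phi(n)}\log\Gamma(k/n)=\tfrac{\phi(n)}{2}\log(2\pi)$, which is (\ref{eq:gammaprod-phi}). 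If $n=p^m$ then $r=1$, and combining $1-p^{-s}=s\log p+O(s^2)$ with $n^s\zeta(s)=-\tfrac12+O(s)$ yields $Z(s)=-\tfrac12(\log p)\,s+O(s^2)$, so $Z'(0)=-\tfrac12\log p$ and the right-hand side of (\ref{eq:gammaprod-phi}) gets multiplied by $e^{-\frac12\log p}=1/\sqrt p$, as claimed.

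The computation itself is short; the one place that deserves care is the transition from $\operatorname{Re}s>1$, where the Dirichlet-series rearrangements are literally valid, to the point $s=0$, which relies on the meromorphic continuation of the Hurwitz and Riemann zeta functions. Conceptually, the only real content is the observation that $\prod_{p\mid n}(1-p^{-s})$ vanishes to order $r$ at $s=0$, which is precisely why prime powers---and only prime powers---produce the extra factor $1/\sqrt p$.
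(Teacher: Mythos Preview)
Your proof is correct and follows essentially the same route as the paper's: both apply Lerch's identity termwise, recognise the resulting sum of Hurwitz zeta values as $n^{s}\zeta(s)\prod_{p\mid n}(1-p^{-s})$ (in the paper this is phrased as $n^{s}L(\chi,s)$ for the principal character $\chi$ mod $n$), and then read off the derivative at $s=0$ from the order-$r$ vanishing of $\prod_{p\mid n}(1-p^{-s})$. The only cosmetic difference is that you work directly with $Z(s)$ rather than naming the Dirichlet $L$-function.
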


\begin{proof}
  Let $\chi$ be the principal character modulo $n$ (that is, $\chi (k) = 1$ if
  $k$ is coprime to $n$, and $\chi (k) = 0$ otherwise), and let $p_1, \ldots,
  p_r$ be the distinct prime factors of $n$. Then the Dirichlet $L$-function
  associated to $\chi$ differs from the Riemann zeta function
  \[ \zeta (s) = \prod_{\text{$p$ prime}} \frac{1}{1 - p^{- s}} \]
  only in that the factors corresponding to the primes $p_1, \ldots, p_r$ are
  missing. In other words,
  \begin{equation}
    L (\chi, s) = \sum_{k = 1}^{\infty} \frac{\chi (k)}{k^s} = \zeta (s) 
    \prod_{j = 1}^r (1 - p_j^{- s}) . \label{eq:Lz}
  \end{equation}
  On the other hand,
  \[ n^s L (\chi, s) = \sum_{k = 1}^n \chi (k) \zeta \left( s, \frac{k}{n}
     \right) . \]
  Taking the derivative with respect to $s$ and applying Lerch's identity
  (\ref{eq:lerch}), we have
  \[ \sum_{k = 1}^n \chi (k) \log \Gamma \left( \frac{k}{n} \right) = \log (n)
     L (\chi, 0) + L' (\chi, 0) + \frac{1}{2} \phi (n) \log (2 \pi) . \]
  It follows from (\ref{eq:Lz}) that $L (\chi, 0) = 0$. Similarly, it follows
  that $L' (\chi, 0) = 0$ provided that $r > 1$. This proves equation
  (\ref{eq:gammaprod-phi}) unless $n$ is a prime power. Lastly, if $n = p^m$
  for some prime $p$, then
  \[ L' (\chi, 0) = \log (p) \zeta (0) = - \frac{1}{2} \log (p), \]
  which proves (\ref{eq:gammaprod-phi}) in the remaining case.
\end{proof}

\begin{example}
  \label{eg:gamma14}In the case $n = 14$, we find
  \[ \Gamma \left( \frac{1}{14} \right) \Gamma \left( \frac{3}{14} \right)
     \Gamma \left( \frac{5}{14} \right) \Gamma \left( \frac{9}{14} \right)
     \Gamma \left( \frac{11}{14} \right) \Gamma \left( \frac{13}{14} \right) =
     (2 \pi)^3 . \]
  In light of (\ref{eq:gammaprod-14}), it follows that
  \[ \Gamma \left( \frac{3}{14} \right) \Gamma \left( \frac{5}{14} \right)
     \Gamma \left( \frac{13}{14} \right) = 2 \pi^{3 / 2}, \]
  which, as in the case of (\ref{eq:gammaprod-14}), also follows from the
  results of {\cite{nijenhuis-gamma}}, which generalize Theorem
  \ref{thm:gammaprod-phi} and are outlined in Remark \ref{rk:nij}. Combining
  both products and using Theorem \ref{thm:prodrat}, one therefore obtains
  \[ \prod_{k \geqslant 0} \frac{(k + 3 / 14) (k + 5 / 14) (k + 13 / 14)}{(k +
     1 / 14) (k + 9 / 14) (k + 11 / 14)} = 2. \]
\end{example}

\begin{remark}
  \label{rk:nij}Note that $\Phi \left( n \right)$, the set of integers between
  $1$ and $n$ which are coprime to $n$, is a group with respect to
  multiplication modulo $n$. Assume that $n > 1$ is odd, and let $A$ be (a
  coset of) the cyclic subgroup of $\Phi \left( 2 n \right)$ generated by $n +
  2$. It is shown in {\cite{nijenhuis-gamma}} that
  \[ \prod_{k \in A} \Gamma \left( \frac{k}{2 n} \right) = 2^{b \left( A
     \right)} \pi^{\left| A \right| / 2}, \]
  with $\left| A \right|$ denoting the cardinality of $A$ and $b \left( A
  \right)$ the number of elements in $A$ which exceed $n$. In particular, this
  lets us construct lots of identities as in Example \ref{eg:gamma14}. To wit,
  for any $A$ as above,
  \[ \prod_{k \in A} \frac{\Gamma \left( \frac{k}{2 n} \right)}{\Gamma \left(
     1 - \frac{k}{2 n} \right)} = 2^{2 b \left( A \right) - \left| A \right|}
     . \]
  For instance, with $n = 31$, one choice is $A = \left\{ 1, 33, 35, 39, 47
  \right\}$ and we find
  \[ \prod_{k \geqslant 0} \frac{(k + 15 / 62) (k + 23 / 62) (k + 27 / 62)
     \left( k + 29 / 62 \right) \left( k + 61 / 62 \right)}{(k + 1 / 62) (k +
     33 / 62) (k + 35 / 62) \left( k + 39 / 62 \right) \left( k + 47 / 62
     \right)} = 8. \]
\end{remark}

\begin{remark}
  Equation (\ref{eq:gammaprod-phi}) may be used to produce the identity
  \begin{eqnarray*}
    \frac{\phi (n)}{2} \log (2 \pi) & = & \sum_{x \in \Phi (n)} \log \Gamma (x
    / n)\\
    & = & \sum_{x \in \Phi (n)} \log \Gamma (1 - x / n)\\
    & = & \sum_{x \in \Phi (n)} \left[ \gamma \frac{x}{n} + \sum_{k =
    2}^{\infty} \frac{\zeta (k)}{k} (x / n)^k \right]\\
    & = & \gamma \frac{\phi (n)}{2} + \sum_{k = 2}^{\infty} \frac{\zeta
    (k)}{k} \sum_{x \in \Phi (n)} (x / n)^k,
  \end{eqnarray*}
  where
  \[ \gamma = \sum_{k = 2}^{\infty} (- 1)^k \frac{\zeta (k)}{k} \]
  is Euler's constant. This may be re-arranged to obtain
  \begin{equation}
    \frac{\log (2 \pi) - \gamma}{2} = \sum_{k = 2}^{\infty} \frac{\zeta
    (k)}{k}  \frac{1}{\phi (n)} \sum_{x \in \Phi (n)} (x / n)^k,
    \label{eq:zetasumphi}
  \end{equation}
  again valid for $n$ which are not a prime power. Note that the expression on
  the left-hand side of (\ref{eq:zetasumphi}) has the interesting property
  that it is independent of $n$. The sums
  \[ \Psi_k (n) \assign \sum_{x \in \Phi (n)} x^k \]
  have been studied in {\cite{singh-powersums}}, where, among other results,
  it is shown that
  \[ \Psi_k (n) = \frac{n^{k + 1}}{k + 1} \sum_{m = 0}^{[k / 2]} \binom{k +
     1}{2 m} \frac{B_{2 m}}{n^{2 m}} \prod_{p|n} (1 - p^{2 m - 1}) . \]
\end{remark}

\begin{remark}
  We note that the proof of Theorem \ref{thm:gammaprod-phi} naturally extends
  to certain more general gamma quotients. For instance, consider the real
  character $\chi (n) = \left( \tfrac{- d}{n} \right)$ where $- d < 0$ is a
  negative fundamental discriminant (fundamental discriminants are squarefree
  integers congruent to $1$ modulo $4$, or multiples by $- 4$ or $- 8$ of such
  numbers) with associated class number $h = h (- d)$. Then
  \begin{equation}
    w \zeta (s) L (\chi, s) = \sum_{j = 1}^h Z_{Q_j} (s), \label{eq:sumzq}
  \end{equation}
  where $w$ is the number of roots of unity in $\mathbbm{Q} ( \sqrt{d})$,
  $Q_1, \ldots, Q_h$ are non-equivalent reduced binary quadratic forms with
  discriminant $- d$, and $Z_Q$ denotes the Epstein zeta function
  \[ Z_Q (s) = \sum_{n, m}' \frac{1}{Q (m, n)^s}, \]
  with the sum extending over all integers $n, m$ such that $(n, m) \neq (0,
  0)$. The {\tmem{Kronecker limit formula}} {\emdash} see {\cite{nw-epstein}}
  for a nice proof {\emdash} shows that
  \[ Z_Q (s) = \frac{a_{- 1}}{s - 1} + a_0 + a_1 (s - 1) + \ldots, \]
  with $a_{- 1} = 2 \pi / \sqrt{d}$ and
  \[ a_0 = \frac{4 \pi}{\sqrt{d}} (\gamma - \log (d^{1 / 4} \sqrt{2 y} | \eta
     (z) |^2)), \]
  where $z = x + i y$ is the solution to $Q (z, 1) = 0$ in the upper
  half-plane, and $\eta (\tau)$ is the Dedekind eta function. We now expand
  both sides of (\ref{eq:sumzq}) around $s = 1$ and equate the constant terms.
  For the right-hand side we employ the Kronecker limit formula, while for the
  left-hand side we proceed as in the proof of Theorem
  \ref{thm:gammaprod-phi}. Using $L (\chi, 0) = \frac{2 h}{w}$, this
  eventually yields
  \begin{equation}
    \prod_{m = 1}^d \Gamma \left( \frac{m}{d} \right)^{\left( \frac{- d}{m}
    \right)} = \left[ \prod_{j = 1}^h 4 \pi \sqrt{d} y_j | \eta (z_j) |^4
    \right]^{2 / w} . \label{eq:cs}
  \end{equation}
  This is the well-known Chowla--Selberg formula {\cite{sc67}}. We note that
  $w = 2$ unless $d = - 3$, in which case $w = 6$, or $d = - 4$, in which case
  $w = 4$. Generalizations of the Chowla--Selberg formula exist, for instance,
  to arbitrary negative discriminants and to genera of binary quadratic forms;
  we refer to {\cite{hkw-cs-genera}} and the references therein. Finally, we
  remark that, for fixed $d$, the individual eta values occurring in
  (\ref{eq:cs}) differ only by an algebraic factor.
\end{remark}

\begin{remark}
  Following {\cite{periods}}, a {\tmem{period}} is a number which is the value
  of an integral of an algebraic function over an algebraic domain. For
  example, as noted in the introduction to {\cite{periods}}, the numbers
  $\Gamma (p / q)^q$ are periods because they may be represented as beta
  integrals. More generally, let $\alpha_1, \ldots, \alpha_n$ be rational
  numbers. Then $\Gamma (\alpha_1) \cdots \Gamma (\alpha_n)$ is a period
  whenever $\Gamma (\alpha_1 + \ldots + \alpha_n)$ is a period. To see this,
  note that
  \[ \Gamma (\alpha_1) \cdots \Gamma (\alpha_n) = B (\alpha_1, \alpha_2) B
     (\alpha_1 + \alpha_2, \alpha_3) \cdots B (\alpha_1 + \ldots + \alpha_{n -
     1}, \alpha_n) \Gamma (\alpha_1 + \ldots + \alpha_n), \]
  where
  \[ B (\alpha, \beta) = \frac{\Gamma (\alpha) \Gamma (\beta)}{\Gamma (\alpha
     + \beta)} = \int_0^1 t^{\alpha - 1} (1 - t)^{\beta - 1} \mathd t \]
  is the beta function. Due to the integral representation its values for
  rational $\alpha, \beta$ are periods. It now suffices to observe that
  periods form a ring.
  
  A similar argument shows that gamma quotients arising from infinite products
  of rational functions with rational roots are always a quotient of two
  periods.
  
  Finally, we remark that, while the ring of periods is countable, it is an
  open problem, {\cite[Problem 3]{periods}}, to exhibit at least one number
  which is provably not a period. For instance, it would be a surprise to many
  if $1 / \pi$ were a period.
\end{remark}

\section{Products involving the Thue--Morse sequence}\label{sec:tm}

The Thue--Morse sequence $t_j$ is defined by $t_j = 1$ if the number of ones
in the binary representation of $j$ is odd and $t_j = 0$ otherwise. For
further information on this sequence and its occurrences in various contexts a
beautiful reference is {\cite{as-thuemorse}}. Let $p (j) = (- 1)^{t_j}$. Then
Theorem \ref{thm:prodrat} implies that, for $m > 1$,
\begin{equation}
  \prod_{k \geqslant 1} \prod_{j = 0}^{2^m - 1} (k + j)^{p (j)} = \prod_{j =
  0}^{2^m - 1} (j!)^{- p (j)} . \label{eq:prodtm}
\end{equation}
As we will see below, the right-hand side of (\ref{eq:prodtm}) further
simplifies.

\begin{remark}
  Note that $p (j) = 1$ for exactly half of the $2^m$ many numbers $j = 0, 1,
  \ldots, 2^m - 1$, so that the left-hand side of (\ref{eq:prodtm}) indeed
  converges when $m > 1$. Denote with $S_1$ the set of integers among $0, 1,
  \ldots, 2^m - 1$ with $p (j) = 1$. Similarly, $S_{- 1}$ consists of those
  with $p (j) = - 1$. Not only are the two sets equinumerous, but also
  $\sum_{j \in S_1} j = \sum_{j \in S_{- 1}} j$ and, in fact, as discovered by
  Prouhet in 1851, see {\cite{wright-prouhet}} or {\cite{as-thuemorse}},
  \[ \sum_{j \in S_1} j^n = \sum_{j \in S_{- 1}} j^n \]
  for all $n = 0, 1, \ldots, m - 1$.
\end{remark}

\begin{example}
  \label{eg:tm3}For instance, when $m = 3$ then $0^n + 3^n + 5^n + 6^n = 1^n +
  2^n + 4^n + 7^n$ for $n = 0, 1, 2$ as well as
  \[ \prod_{k \geqslant 1} \frac{k (k + 3) (k + 5) (k + 6)}{(k + 1) (k + 2) (k
     + 4) (k + 7)} = \frac{2!4!7!}{3!5!6!} = \frac{7}{3 \cdot 5} . \]
\end{example}

In the next lemma, we observe that the right-hand side of (\ref{eq:prodtm})
always simplifies as it did in Example \ref{eg:tm3}.

\begin{lemma}
  \label{lem:prodtms}For integers $m > 1$,
  \begin{equation}
    \prod_{k \geqslant 1} \prod_{j = 0}^{2^m - 1} (k + j)^{p (j)} = \prod_{j =
    0}^{2^{m - 1} - 1} (2 j + 1)^{p (j)} . \label{eq:prodtms}
  \end{equation}
\end{lemma}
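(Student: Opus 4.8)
The plan is to compute the right-hand side $R_m \assign \prod_{j=0}^{2^m-1}(j!)^{-p(j)}$ of \eqref{eq:prodtm} directly, by splitting the indices $j \in \{0,1,\ldots,2^m-1\}$ according to parity and exploiting the self-similar structure of the Thue--Morse sequence. The key combinatorial facts are that $p(2j) = p(j)$ and $p(2j+1) = -p(j)$ for all $j \geqslant 0$; these let me relate the contributions of $j$ in the range $\{0,\ldots,2^m-1\}$ to those of $\lfloor j/2 \rfloor$ in the range $\{0,\ldots,2^{m-1}-1\}$. First I would write $R_m = \prod_{i=0}^{2^{m-1}-1} (2i)!^{-p(2i)} (2i+1)!^{-p(2i+1)} = \prod_{i=0}^{2^{m-1}-1}\bigl((2i)!\,/\,(2i+1)!\bigr)^{-p(i)} = \prod_{i=0}^{2^{m-1}-1} (2i+1)^{p(i)}$, since $(2i+1)!/(2i)! = 2i+1$. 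That already is the right-hand side of \eqref{eq:prodtms}, so the lemma follows once this bookkeeping is checked, together with \eqref{eq:prodtm} which is granted.

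There is one subtlety to handle carefully: convergence and the legitimacy of rearranging the finite product $R_m$. The finite product $R_m$ has no convergence issue, so the regrouping $\prod_{j=0}^{2^m-1} = \prod_{i=0}^{2^{m-1}-1}\bigl(\text{even part}\bigr)\bigl(\text{odd part}\bigr)$ is just a reindexing of a finite product and is unconditionally valid. The only place one must be slightly attentive is the $j=0$ term on the left: $0! = 1$, so it contributes trivially and causes no trouble, and on the right-hand side of \eqref{eq:prodtms} the $j=0$ term is $1^{p(0)} = 1$ as well, so the two sides match at that index. Thus the identities $p(2j)=p(j)$, $p(2j+1)=-p(j)$ are really the entire content, and these follow immediately from the definition: appending a $0$ bit to the binary expansion of $j$ leaves the number of ones unchanged, while appending a $1$ bit increases it by one.

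The main (very mild) obstacle is simply making the index substitution $j \mapsto \lfloor j/2\rfloor$ transparent and verifying the sign identities for $p$ cleanly; there is no analytic difficulty at all, since \eqref{eq:prodtm} has already done the work of converting the infinite product into the closed form $\prod_j (j!)^{-p(j)}$ via Theorem \ref{thm:prodrat}. I would present the argument as a short chain of equalities: invoke \eqref{eq:prodtm}, split by parity, telescope each factorial ratio $(2i)!/(2i+1)! = 1/(2i+1)$, apply $p(2i) = p(2i+1)\cdot(-1) = p(i)$ to collapse the exponents, and read off \eqref{eq:prodtms}. If desired, one could alternatively give a direct product-side argument: group the left-hand side of \eqref{eq:prodtms} into $\prod_k \prod_j (k+j)^{p(j)}$ with $j$ even versus odd, recognize $\prod_{k\geqslant 1}\prod_{i=0}^{2^{m-1}-1}(k+2i)^{p(i)}(k+2i+1)^{-p(i)} = \prod_{k\geqslant 1}\prod_{i}\bigl(\tfrac{k+2i}{k+2i+1}\bigr)^{p(i)}$, but the factorial route above is cleaner and I would take that one.
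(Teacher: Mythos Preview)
Your proposal is correct and follows essentially the same route as the paper: invoke \eqref{eq:prodtm}, split the finite product $\prod_{j=0}^{2^m-1}(j!)^{-p(j)}$ by the parity of $j$, use $p(2j)=p(j)$ and $p(2j+1)=-p(j)$ to collapse each pair $(2i)!^{-p(2i)}(2i+1)!^{-p(2i+1)}$ into $(2i+1)^{p(i)}$, and read off \eqref{eq:prodtms}. The paper's proof is terser but identical in substance.
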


\begin{proof}
  Note that $p (2 j)$ and $p (2 j + 1)$ are always of opposite sign. Thus,
  \[ \prod_{j = 0}^{2^m - 1} (j!)^{- p (j)} = \prod_{j = 0}^{2^{m - 1} - 1}
     \left[ \frac{(2 j) !}{(2 j + 1) !} \right]^{p (2 j + 1)} . \]
  It only remains to use that $p (2 j + 1) = - p (j)$, which follows from the
  definition of the Thue--Morse sequence.
\end{proof}

\begin{example}
  Combining Example \ref{eg:tm3} with the corresponding product for $m = 4$,
  one finds
  \[ \prod_{k \geqslant 1} \frac{(k + 9) (k + 10) (k + 12) (k + 15)}{(k + 8)
     (k + 11) (k + 13) (k + 14)} = \frac{11 \cdot 13}{9 \cdot 15} . \]
\end{example}

From computing a few more instances, we are led to observe that both sides of
(\ref{eq:prodtms}) appear to approach $1 / 2$ as $m \rightarrow \infty$. In
other words,
\begin{equation}
  \lim_{m \rightarrow \infty} \prod_{j = 0}^{2^m - 1} (2 j + 1)^{p (j)} =
  \frac{1}{2} . \label{eq:tm-half}
\end{equation}
To see that this is indeed true, it is of advantage to also consider
\begin{equation}
  f_m (x) = \prod_{j = 0}^{2^m - 1} (x + j)^{p (j)} . \label{eq:tm-fm}
\end{equation}
This product arises in
\[ \prod_{k \geqslant 0} \prod_{j = 0}^{2^m - 1} (x + k + j)^{p (j)} =
   \prod_{j = 0}^{2^{m - 1} - 1} (x + 2 j)^{p (j)} = f_m ( \tfrac{1}{2}), \]
which is a natural extension of (\ref{eq:prodtms}) and can be proved in the
same way.

Now the truth of (\ref{eq:tm-half}) can be seen from the solution
{\cite{robbins-2692}} to the problem {\cite{woods-2692}} proposed by Woods.
Indeed, denoting with $f (x)$ the limit of $f_m (x)$ as $m \rightarrow
\infty$, we need to show that $f (1 / 2) = 1 / 2$. Note that
\[ f_m (x + \tfrac{1}{2}) = \prod_{j = 0}^{2^m - 1} (x + \tfrac{1}{2} + j)^{p
   (j)} = \prod_{j = 0}^{2^m - 1} (2 x + 2 j + 1)^{- p (2 j + 1)} . \]
Proceeding as in {\cite{robbins-2692}}, we thus find that
\begin{equation}
  f_{m + 1} (2 x) f_m (x + \tfrac{1}{2}) = f_m (x) . \label{eq:tm-fm-dup}
\end{equation}
With appropriate care one may now apply L'H\^ospital's rule to obtain
\[ f ( \tfrac{1}{2}) = \lim_{x \rightarrow 0} \frac{f (x)}{f (2 x)} = \frac{f'
   (0)}{2 f' (0)} = \frac{1}{2}, \]
as desired.

\begin{remark}
  Combining terms as in the proof of Lemma \ref{lem:prodtms}, we note that
  \begin{equation}
    f (x) = \lim_{m \rightarrow \infty} \prod_{j = 0}^{2^m - 1} (x + j)^{p
    (j)} = \prod_{j = 0}^{\infty} \left( \frac{2 j + x}{2 j + x + 1}
    \right)^{p (j)}, \label{eq:tm-f}
  \end{equation}
  because the infinite product converges. It further follows from
  (\ref{eq:tm-fm-dup}) that $f (1)^2 = f (1 / 2)$ and hence $f (1) = 1 /
  \sqrt{2}$. In light of (\ref{eq:tm-f}) this result is equivalently expressed
  as
  \begin{equation}
    P = \prod_{j = 0}^{\infty} \left( \frac{2 j + 1}{2 j + 2} \right)^{p (j)}
    = \frac{1}{\sqrt{2}},
  \end{equation}
  which is the evaluation asked for in problem {\cite{woods-2692}}. A
  beautiful alternative solution, avoiding analytic tools such as
  L'H\^ospital's rule, is given in {\cite{as-thuemorse}}. The clever
  alternative proof considers, besides $P$, the product
  \[ Q = \prod_{j = 1}^{\infty} \left( \frac{2 j}{2 j + 1} \right)^{p (j)}, \]
  and shows that
  \begin{eqnarray*}
    P Q & = & \frac{1}{2} \prod_{j = 1}^{\infty} \left( \frac{j}{j + 1}
    \right)^{p (j)}\\
    & = & \frac{1}{2} \prod_{j = 1}^{\infty} \left( \frac{2 j + 1}{2 j + 2}
    \right)^{p (2 j + 1)} \prod_{j = 1}^{\infty} \left( \frac{2 j}{2 j + 1}
    \right)^{p (2 j)}\\
    & = & \frac{1}{2} P^{- 1} Q.
  \end{eqnarray*}
  Cancelling $Q$, one has again derived $P = 1 / \sqrt{2}$. On the other hand,
  the quantity $Q$ is much more mysterious. It is not even known whether $Q$
  is irrational, let alone transcendental. Jeffrey Shallit has offered \$25
  for an answer to this question.
\end{remark}

\paragraph{Acknowledgements}
We wish to thank Pieter Moree for an interesting
and useful discussion on infinite products in general, and Bruce Berndt for
helpful comments and advice. Finally, the second author would like to thank
the Max-Planck-Institute for Mathematics in Bonn, where part of this work was
completed, for providing wonderful working conditions.


\end{document}